\documentclass[12pt]{article}
\usepackage{openwork}
\usepackage{amsmath, amssymb,bm}
\usepackage{amsmath, amsthm, amscd, amsfonts, amssymb, graphicx, color}

\newtheorem{theorem}{Theorem}[section]
\newtheorem{lemma}[theorem]{Lemma}

\theoremstyle{definition}
\newtheorem{definition}[theorem]{Definition}
\newtheorem{example}[theorem]{Example}

\theoremstyle{remark}

\numberwithin{equation}{section}

\def\E{{\mathbb {E}}}                       
                       
                       \def\R{{\mathbb {R}}}
\def\I{{\mathbb {I}}}                       
\def\Eh{{\hat{\mathbb{E}}}}
\def\Vh{{\hat{\mathbb{V}}}}

\def\PD{{\mathcal P}}                     
                     \def\ED{{\cal E}}
\def\FD{{\mathcal F}}

\def\HD{{\mathcal H}}

\def\ED{{\mathcal E}}

\def\tsc#1{\csdef{#1}{\textsc{\lowercase{#1}}\xspace}}
\tsc{WGM}
\tsc{QE}
\tsc{EP}
\tsc{PMS}
\tsc{BEC}
\tsc{DE}
\title{Strong law of large numbers for $\varphi$-sub-Gaussian random variables under sub-linear expectation spaces}
\author[1,*]{Nyanga Honda Masasila}
\author[2]{Istv\'an Fazekas}
\affil[1]{Doctoral School of Informatics, University of Debrecen, Hungary}
\affil[2]{Faculty of Informatics, University of Debrecen, Hungary}

\date{}

\DeclareFieldFormat{labelnameyear}{}
\DeclareFieldFormat{labelyear}{}
\DeclareFieldFormat{extradate}{}
\begin{document}

\maketitle

\begin{abstract}
We introduce the notions of sub-Gaussian and $\varphi$-sub-Gaussian random variables in sub-linear expectation spaces.
To avoid the problem caused by the existence of two different expectations, i.e., 
the upper expectation and the lower expectation,
we divide the definition of the sub-Gaussian property into an upper part and a lower part.
It turns out that this approach fits well to the sub-linear setting; it provides a proper framework for extending Zajkowski's general result (\cite{Zajkowski2021}) to sublinear expectation spaces. 
Within our framework, we establish a strong law of large numbers for sub-Gaussian sequences. 
We present an example showing the usefulness of our results. \\

\textbf{Keywords:} sub-linear expectations, non-additive probabilities, sub-Gaussian random variables, Strong laws of large numbers.
\end{abstract}

\section{Introduction}

 In the usual probability framework, the strong law of large numbers (SLLN) relies on linearity of expectation and additivity of probability measure. In many modern applications, including risk measures, robust finance, and models with ambiguity, these assumptions may fail.
This has motivated the development of \emph{sub-linear expectation spaces}, 
in which expectations are sub-linear functionals and probabilities (capacities) are sub-additive.
In this setting, the SLLN takes another form: 
it asserts that every cluster point of the sequence of empirical averages lies between the lower and upper expectations,
 with lower capacity equal to 1. 
 Formally,
\begin{equation}
\label{slln}
  v\!\left(\omega\in \Omega :\ED(X_1) \le\liminf_{n \to \infty} \frac{1}{n} \sum_{k=1}^n X_k(\omega) \le 
  \limsup_{n \to \infty} \frac{1}{n} \sum_{k=1}^n X_k(\omega) \le \Eh(X_1) \right) = 1,  
\end{equation}
where $\Eh$ denotes the upper (sub-linear) expectation, $\ED$ the lower (conjugate) expectation, 
and $v$ the corresponding lower capacity. 

Building on this framework, numerous authors have investigated versions of the strong law of large numbers (SLLN) under various sets of assumptions. In particular, \cite{marinacci1999limit} and \cite{MaccheroniMarinacci2005} established an SLLN for bounded and continuous random variables under a totally monotone capacity. Subsequently, \cite{ChenZ2013} proved an SLLN for independent random variables assuming finite $(1+\alpha)^{th}$ moments of the upper expectation, while \cite{Chen2012} further relaxed the framework by removing the continuity assumption on the upper capacity.

Sub-Gaussian distributions were studied  in \cite{Kahane1960}. Then 
it was used to prove laws of large numbers (see, e.g., \cite{Chow1966},  \cite{Taylor1987} and \cite{Buldygin2000}); see also \cite{Antonini2013} for related results. Recently, \cite{Zajkowski2021} developed a general strong law for $\varphi$-sub-Gaussian sequences under linear expectations. The purpose of the present paper is to extend this framework to sub-linear expectation spaces.
\section{Basic Properties of Capacities and Sub-linear Expectations} \label{sublinear}
%%%%%%%%%%%%%%%%%%%%%%%%%%%%%%%%%%%%%%%%beginFIFI
Based on the classical results of  \cite{Choquet1954} and subsequent developments by \cite{Huber1973}, 
we summarize the properties of capacities.
Let $\Omega$ be a non-empty set and let $\FD$ be a $\sigma$-algebra of subsets of $\Omega$. 

\begin{definition} \label{defV}
$\Vh$ is called a sub-additive probability (upper probability or upper capacity), if 

\leavevmode
\begin{enumerate}
    \item (Normalized.) $\Vh(\Omega) = 1$, $\Vh(\emptyset) = 0$.
    \item (Monotone.) If $A, B \in \FD$ and $A \subset B$, then $\Vh(A) \leq \Vh(B)$.
    \item (Sub-additive.) If $A, B \in \FD$, then
    \[
    \Vh\left(A \cup B \right) \leq \Vh(A) + \Vh(B).
    \]
    \item (Lower-continuous.) If $A_n \uparrow A$, $A_n \in \FD$, 
    then $\Vh(A_n) \uparrow \Vh(A)$.
\end{enumerate}
\end{definition}
Sub-additivity and lower-continuity imply $\sigma$-sub-additivity:
  For any sequence $\{A_n\}_{n=1}^\infty \subset \FD$,
    \[
    \Vh\left(\bigcup_{n=1}^\infty A_n\right) \leq \sum_{n=1}^\infty \Vh(A_n).
    \]
    
An event $A$ is called a quasi-sure (q.s.) event if $\Vh(A^c)=0$, 
where $A^{c}$ denotes the complement of the event $A$ in $\Omega$.

The lower capacity corresponding to $\Vh$ can be expressed in terms of $\Vh$ as $v(A) = 1 - \Vh(A^{c})$.
The set functions $\Vh$ and $v$ form a pair of conjugate capacities.

\begin{example}  \label{exmplV}
The basic method for obtaining a sub-additive probability is as follows.
Let $\mathcal{P}$ be a family of usual (that is, $\sigma$-additive) probabilities on $(\Omega, \FD)$. 
Introduce $\Vh$ as
\begin{equation}  \label{Vsup}
\Vh(A) = \sup_{Q \in \PD} Q(A), \quad \forall A \in \FD.
\end{equation}
It is easy to see that $\Vh$ satisfies the properties listed in Definition \ref{defV}.
The corresponding lower capacity is 
\begin{equation*}  
v(A) = \inf_{Q \in \PD} Q(A), \quad \forall A \in \FD.
\end{equation*}
\end{example}
Standard results such as Chebyshev’s inequality and the Borel–Cantelli lemma remain valid in this framework (see, \cite{ChenZ2013} and \cite{Peng2019}).

%%%%%%%%%%%%%%%%%%%%%%%%%%%%%%%%%%%%%%%%%%%%%%%%%%%%%%%%%%%%%
Now, we list basic properties of the sub-linear expectation $\Eh$; see \cite{Peng2019} for details.
We mention that, in certain papers, $\Eh$ is called the upper expectation or sub-additive expectation.
As usual, an extended real-valued function $X$ on $\Omega$ is called a random variable if
$X^{-1}(A) \in \FD$ for any Borel set $A$.
We assume, that there exists a subset $\HD$ of the random variables and
an extended real-valued function $\Eh[X]$ of $X\in \HD$ 
so that the assumptions of Definition \ref{defE} are satisfied.
We shall suppose that the non-negative random variables belong to $\HD$.
In Definition \ref{defE}, the case $ \Eh[X] + \Eh[Y]= \infty + (-\infty)$ is excluded.

%%%%%%%%%%%
\begin{definition} \label{defE}
An extended real-valued function $\Eh[X]$ of $X\in \HD$ is called a sub-linear expectation if
it satisfies the following properties.
\begin{enumerate}
    \item Monotone: If $X \leq Y$, then $\Eh[X] \leq \Eh[Y]$.
    \item Constant preserving: $\Eh[c] = c$, for any $c \in \R$.
    \item Sub-additive: $\Eh[X+Y] \leq \Eh[X] + \Eh[Y]$.
    \item Positive homogeneous: $\Eh[\lambda X] = \lambda \Eh[X]$ for any constant $\lambda \geq 0$.
    \item Monotone convergence: If $X_n \uparrow X$, and $X_1\ge 0$,  then $\Eh[X_n] \uparrow \Eh[X]$.
\end{enumerate}
\end{definition}

$(\Omega, \HD, \Eh)$ is called a sub-linear expectation space.

If the sub-linear expectation $\Eh[\,.\,]$ is given in advance, then 
we can introduce a sub-additive probability $\Vh$
by $\Vh(A) = \Eh [\I_A]$, for any event $A$,  where $\I_A$ denotes the indicator of $A$.
Then this $\Vh$ satisfies the properties given in Definition \ref{defV}.

Throughout this paper, we assume that an upper probability $\Vh$ is given on $(\Omega, \FD)$, 
and a sub-linear expectation is given on $\HD$ so that $\Vh(A) = \Eh [\I_A]$.
Relations of random variables are considered as quasi-sure relations, e.g.,
$\Eh[X]= \Eh[Y]$ if $X=Y$ q.s.

%%%%%%%%%%%%
\begin{example} \label{ExmplE}
The usual method to define a sub-additive expectation is as follows.
Let $\PD$ be a family of probabilities on $(\Omega, \FD)$. 
Let
\begin{equation}  \label{ESupE}
\Eh[X] = \sup_{Q \in \PD} \E_Q[X], 
\end{equation}
where $\E_Q$ is the usual expectation corresponding to $Q \in \PD$ such that
%\[
$\E_Q[\I_A] = Q(A), \quad \forall A \in \FD$.
%\]
Then, properties listed in Definition \ref{defE} are satisfied.
\end{example}

We remark that if an abstract sub-linear expectation $\Eh [\,.\,]$ is given with the properties listed in Definition \ref{defE}, 
and certain additional conditions are satisfied, then $\Eh$ has a representation \eqref{ESupE}.
For the precise statement, see \cite{Delbaen2002} and \cite{Peng2010}.

For $X \in \HD$, $\Eh(X)$ can be called supermean, whereas $\ED(X) = -\Eh(-X)$ is called submean. 
By the above properties of $\Eh(X)$, we have $\ED(X) \le \Eh(X)$. 
If $\ED (X) \ne -\Eh(X)$,
then $X$ is said to have mean uncertainty.

%%%%%%%%%%%%%%%%%%%%%%%%%%%%%%%%%%

\section{Sub-Gaussian Random Variables under Sub-linear Expectations}
\label{subgauss}
For a usual probability space $(\Omega,\FD,P)$, a random variable
$X:\Omega\to\R$ is called \emph{sub-Gaussian} if there exists a constant
$a\in[0,\infty)$ such that
\begin{equation}  \label{eq:classical_subgaussian}
\E_P\!\left[e^{\lambda X}\right]
\le
\exp\!\left(\frac{a^2\lambda^2}{2}\right),
\qquad \text{for all} \ \ \lambda\in\R.
\end{equation}

Condition \eqref{eq:classical_subgaussian} shows that a random variable is
sub-Gaussian if and only if its moment generating function is dominated by that
of a centered Gaussian random variable with variance parameter $a^2$. 
The term \emph{sub-Gaussian} reflects this Gaussian-type exponential moment control rather
than any exact distributional identity.
We see that condition \eqref{eq:classical_subgaussian} holds only for zero-mean random variables.
For sub-linear expectation, we shall replace it with two one-sided conditions.
More generally, we shall do it for $\varphi$-sub-Gaussian property.

%%%%%%%%%%%%%%%%%%%%%%%%%%%%%%%%%%%%%%%%%%%%%%%%%%%%%%%%%%%%%%%%%%%%%
%begin FI
%%%%%%%%%%%%%%%%%%%%%%%%%%%%%%%%%%%%%%%%%%%%%%%%%%%%%%%%%%%%%%%%%%%%%%

Therefore, we shall introduce the notion of $\varphi$-sub-Gaussian random variables for sub-linear expectation spaces.
For usual probability spaces, this was studied e.g., in \cite{Zajkowski2021}.
We call a real continuous even convex function $\varphi(x)$, $x \in \R$, an
$N$-function if the following conditions are satisfied:
\begin{itemize}
\item[(a)] $\varphi(0) = 0$ and $\varphi(x)$ is monotone increasing for $x > 0$,
\item[(b)] $\lim_{x \to 0} \frac{\varphi(x)}{x} = 0$ and
$\lim_{x \to \infty} \frac{\varphi(x)}{x} = \infty$.
\end{itemize}

An $N$-function $\varphi$ is called a quadratic $N$-function if, in addition,
$\varphi(x) = c x^2$ for all $|x| \le x_0$ with $c > 0$ and $x_0 > 0$.

In our results, we shall use the following quadratic $N$-function, see \cite{Zajkowski2021}.
\begin{definition} 
For $p \ge 1$, let
\[
\varphi_p(x) =
\begin{cases}
\dfrac{x^2}{2}, &   \text{if} \ \ |x| \le 1, \\[2ex]
\dfrac{1}{p}|x|^p - \dfrac{1}{p} + \dfrac{1}{2}, & \text{if} \ \ |x| > 1.
\end{cases}
\]
\end{definition}
The function $\varphi_p$ can be considered as a standardization of the function $|x|^p$.
We shall see that, for $p = 2$, we have the case of the usual sub-Gaussian random variables.

\begin{definition}
Let $\varphi$ be a quadratic $N$-function.
A random variable $\xi$ is said to be $\varphi$-sub-Gaussian under the sub-linear expectation $\Eh$ if there exist
constants  $a > 0$, $\overline{m}$, and $\underline{m}$, such that
\begin{equation}   \label{upGauss}
\Eh e^{\lambda(\xi - \overline{m}}) \le e^{\varphi(a\lambda)}, \quad {\text {for}} \quad \lambda>0,
\end{equation}
and 
\begin{equation}   \label{lowGauss}
\Eh e^{\lambda(\xi - \underline{m}}) \le e^{\varphi(a\lambda)}, \quad {\text {for}} \quad \lambda<0.
\end{equation}
$a > 0$, $\overline{m}$, and $\underline{m}$ are parameters of the $\varphi$-sub-Gaussian variable.
\end{definition}

For the upper bound, we shall use convex conjugate.
For any real function $\varphi(x)$, $x \in \R$, 
the function $\varphi^*(y)$, $y \in \R$, is called the convex conjugate of $\varphi$, if
$\varphi^*(y) = \sup_{x \in \R} \{ xy - \varphi(x) \}$.

The following properties are known for the convex conjugate, see e.g. \cite{Zajkowski2020}.
If $\varphi$ is a quadratic $N$-function, then $\varphi^*$ is also a quadratic $N$-function.
For $p, q > 1$ such that $\frac{1}{p} + \frac{1}{q} = 1$, we have
$\varphi_p^* = \varphi_q$.
The convex conjugation is order-reversing, i.e.  if $\varphi_1 \ge \varphi_2$, then $\varphi_1^* \le \varphi_2^*$.
The following scaling property holds.
For $a > 0$ and $b \ne 0$, let $\psi(x) = a \varphi(bx)$.
Then
$\psi^*(y) = a \varphi^*\left( \frac{y}{ab} \right)$.

\begin{lemma} \label{mainLemma}
Assume that \eqref{upGauss} and \eqref{lowGauss} are satisfied.
Then for $\varepsilon>0$ we have
\begin{equation}
\Vh\left( \{ \xi-\overline{m} > \varepsilon \} \cup \{ \xi-\underline{m} < -\varepsilon \} \right)  
\le 2 e^{-\varphi^*\left(\varepsilon/a\right)}.
\end{equation}

\end{lemma}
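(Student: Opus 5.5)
The plan is to bound the upper capacity of the union by sub-additivity of $\Vh$ (Definition \ref{defV}). This reduces the problem to the two tails
\[
\Vh(\xi-\overline{m}>\varepsilon)\quad\text{and}\quad \Vh(\xi-\underline{m}<-\varepsilon),
\]
and we will show that each is at most $e^{-\varphi^*(\varepsilon/a)}$.

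\emph{Upper tail.} Fix $\lambda>0$. On the event $\{\xi-\overline{m}>\varepsilon\}$ we have $\I_{\{\xi-\overline{m}>\varepsilon\}}\le e^{\lambda(\xi-\overline{m}-\varepsilon)}$, and the right-hand side is non-negative everywhere. Since $\Vh(A)=\Eh[\I_A]$, monotonicity and positive homogeneity of $\Eh$ together with \eqref{upGauss} give a Chebyshev--Markov type bound:
\[
\Vh(\xi-\overline{m}>\varepsilon)\le e^{-\lambda\varepsilon}\,\Eh e^{\lambda(\xi-\overline{m})}\le \exp\bigl(-(\lambda\varepsilon-\varphi(a\lambda))\bigr).
\]
We then take the infimum over $\lambda>0$, which yields $\exp(-\sup_{\lambda>0}\{\lambda\varepsilon-\varphi(a\lambda)\})$.

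\emph{Lower tail.} The same argument with $\lambda<0$ applies. Here $\I_{\{\xi-\underline{m}<-\varepsilon\}}\le e^{\lambda(\xi-\underline{m}+\varepsilon)}$, and \eqref{lowGauss} gives
\[
\Vh(\xi-\underline{m}<-\varepsilon)\le \exp\bigl(-\sup_{\lambda<0}\{-\lambda\varepsilon-\varphi(a\lambda)\}\bigr).
\]
Writing $\mu=-\lambda>0$ and using that $\varphi$ is even, this supremum equals $\sup_{\mu>0}\{\mu\varepsilon-\varphi(a\mu)\}$, which is the same quantity as in the upper tail.

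\emph{Identifying the conjugate.} The remaining step, which is the main point needing care, is to show that
\[
\sup_{\lambda>0}\{\lambda\varepsilon-\varphi(a\lambda)\}=\varphi^*(\varepsilon/a).
\]
By the scaling property with $\psi(x)=\varphi(ax)$, we get $\psi^*(\varepsilon)=\varphi^*(\varepsilon/a)$, where $\psi^*(\varepsilon)=\sup_{\lambda\in\R}\{\lambda\varepsilon-\varphi(a\lambda)\}$. We must therefore check that restricting to $\lambda>0$ does not lose anything when $\varepsilon>0$. For $\lambda\le 0$ we have $\lambda\varepsilon-\varphi(a\lambda)\le 0$, because $\varphi\ge 0$. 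On the other hand, for small $\lambda>0$ the expression is positive, since $\varphi(a\lambda)/\lambda\to 0$ by property (b). Hence the supremum over $\R$ is attained on $\lambda>0$, and it equals the supremum over $\lambda>0$.

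Adding the two tail bounds gives the stated factor $2e^{-\varphi^*(\varepsilon/a)}$.
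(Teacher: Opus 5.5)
Your proof is correct and follows the same route as the paper: an exponential Chebyshev bound for each tail via \eqref{upGauss} and \eqref{lowGauss}, optimization over $\lambda$ through the convex conjugate with the scaling property, evenness to match the two tails, and sub-additivity of $\Vh$ for the factor $2$. You also justify two steps the paper leaves implicit: that restricting the supremum to $\lambda>0$ (resp.\ $\lambda<0$) still yields $\varphi^*(\varepsilon/a)$, and that the union bound is what produces the constant $2$.
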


\begin{proof}
Let $\varepsilon>0$ and $\lambda>0$.
Then, by the Chebyshev inequality and the $\varphi$-sub-Gaussian property, we have
\begin{eqnarray}
\Vh(\xi-\overline{m} > \varepsilon) =
\Vh \left( e^{ \lambda(\xi -\overline{m})} > e^{\lambda\varepsilon} \right)  
\le
\frac{1}{e^{\lambda\varepsilon}} \Eh \left( e^{ \lambda(\xi -\overline{m}) }\right)
\le
\frac{1}{e^{\lambda\varepsilon}} e^{ \varphi(a\lambda)}
=e^{-( \lambda\varepsilon- \varphi(a\lambda))}.
\end{eqnarray}
To find the optimal upper bound that is offered by the above inequality, we use convex conjugate.
So we obtain
\begin{equation}
\Vh(\xi-\overline{m} > \varepsilon)  \le e^{-\varphi^*\left(\varepsilon/a\right)}, \quad \text{for} \ \ \varepsilon>0.
\end{equation}

For the other side, let $\varepsilon<0$ and $\lambda<0$.
Then, by the Chebyshev inequality and the $\varphi$-sub-Gaussian property, we have
\begin{eqnarray}
\Vh(\xi-\underline{m} < \varepsilon) \le
\frac{1}{e^{\lambda\varepsilon}} \Eh \left( e^{ \lambda(\xi -\underline{m}) }\right)
\le
e^{-( \lambda\varepsilon- \varphi(a\lambda))}.
\end{eqnarray}
From this inequality, the optimal upper bound is
\begin{equation}
\Vh(\xi-\overline{m} < \varepsilon)  \le e^{-\varphi^*\left(\varepsilon/a\right)}, \quad \text{for} \ \ \varepsilon<0.
\end{equation}
As $\varphi^*$ is an even function, we obtain the result.
\end{proof}

To obtain the optimal value of the parameter $a$, we need the following quantity.
\begin{definition} \label{tau}
For a $\varphi$-sub-Gaussian random variable $\xi$ with fixed $\overline{m}$ and $\underline{m}$
 let $\tau_\varphi(\xi)$ be defined as
\begin{eqnarray*}
\tau_\varphi(\xi)
=
\inf\Big\{
a \ge 0 \, :\, 
\Eh e^{\lambda(\xi - \overline{m}}) \le e^{\varphi(a\lambda)}, \ \ {\text {for}} \ \ \lambda>0, \quad 
\text{and} \ \ \Eh e^{\lambda(\xi - \underline{m}}) \le e^{\varphi(a\lambda)}, \ \ {\text {for}} \ \ \lambda<0  
\Big\}.
\end{eqnarray*}
\end{definition}

\section{The Main Result} \label{main}

The following theorem is an extension of Theorem 2.1 of \cite{Zajkowski2021} 
to sub-linear expectation.
\begin{theorem} \label{MainTheorem}
For some $p>1$, let $Z_n, n\ge1$, be a sequence of  $\varphi_p$-sub-Gaussian random variables with parameters
$\overline{m}$ and $\underline{m}$.
Let $\tau_\varphi(Z_n)$ be defined according to Definition \ref{tau}.
If there exist positive numbers $c$ and $\alpha$ such that for every natural number $n$, 
the condition $\tau_{\varphi_p}\!( Z_n )\le c\, n^{-\alpha}$ is satisfied, then
\begin{equation}
\label{upper}
\Vh\left(
\left\{\liminf_{n\to\infty}{Z_n} < \underline{m}\right\}
\cup
\left\{\limsup_{n\to\infty}{Z_n} > \overline{m}\right\}
\right)
= 0
\end{equation}
and 
\begin{equation}
\label{lower}
v\left(
\underline{m}
\le \liminf_{n\to\infty}{Z_n}
\le \limsup_{n\to\infty}{Z_n}
\le \overline{m}
\right)
= 1.
\end{equation}
\end{theorem}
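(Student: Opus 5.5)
The plan is a Borel--Cantelli argument driven by Lemma \ref{mainLemma}. Fix $\varepsilon>0$ and set
\[
A_n(\varepsilon)=\{Z_n-\overline{m}>\varepsilon\}\cup\{Z_n-\underline{m}<-\varepsilon\}.
\]
For any $a>\tau_{\varphi_p}(Z_n)$, the defining inequalities \eqref{upGauss} and \eqref{lowGauss} hold with parameter $a$. This is because $\varphi_p$ is even and increasing on $[0,\infty)$, so $e^{\varphi_p(a\lambda)}$ is nondecreasing in $a$ for each fixed $\lambda$. I will take $a=2c\,n^{-\alpha}$, or simply $c\,n^{-\alpha}$ after passing to the limit in $a$, which is justified by continuity of $\varphi_p^*$. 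Lemma \ref{mainLemma} then gives
\[
\Vh(A_n(\varepsilon))\le 2\exp\bigl(-\varphi_p^*(\varepsilon n^{\alpha}/(2c))\bigr)=2\exp\bigl(-\varphi_q(\varepsilon n^{\alpha}/(2c))\bigr),
\]
where $1/p+1/q=1$ and we used $\varphi_p^*=\varphi_q$.

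The next step is summability. For large $n$ the argument $x=\varepsilon n^\alpha/(2c)$ exceeds $1$, so $\varphi_q(x)=\frac1q x^q-\frac1q+\frac12$. The bound is therefore at most a constant times $\exp(-C\varepsilon^q n^{\alpha q})$ with $C>0$. Since $\alpha q>0$, this sequence is summable in $n$; for instance $e^{-Cn^{\beta}}\le K_\beta n^{-2}$ for any $\beta>0$. So $\sum_n \Vh(A_n(\varepsilon))<\infty$.

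Borel--Cantelli for upper capacities then applies. It holds here via $\sigma$-sub-additivity and monotonicity: $\Vh(\limsup_n A_n)\le \sum_{n\ge N}\Vh(A_n)\to0$. Hence $\Vh(\limsup_n A_n(\varepsilon))=0$. Now observe that
\[
\{\limsup Z_n>\overline{m}\}\cup\{\liminf Z_n<\underline{m}\}\subset\bigcup_{k\ge1}\limsup_n A_n(1/k).
\]
Indeed, if $\limsup Z_n>\overline{m}$, then $Z_n>\overline{m}+1/k$ infinitely often for some $k$, and the $\liminf$ case is symmetric. Applying $\sigma$-sub-additivity once more shows that this countable union has $\Vh$-measure $0$, which is \eqref{upper}. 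Statement \eqref{lower} follows immediately from $v(A)=1-\Vh(A^c)$, since the event in \eqref{lower} is exactly the complement of the event in \eqref{upper}.

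The main obstacle is mild: applying the Lemma with a parameter slightly above the infimum $\tau_{\varphi_p}(Z_n)$. This requires the monotonicity in $a$ noted above, which holds because $\varphi_p$ is even and increasing on $[0,\infty)$. It also requires the explicit form of $\varphi_p^*=\varphi_q$, which the paper quotes, in order to get the stretched-exponential decay. Measurability of the $\liminf$/$\limsup$ events is standard.
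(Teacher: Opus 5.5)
Your proposal is correct and follows the paper's own route. Lemma~\ref{mainLemma} together with $\varphi_p^*=\varphi_q$ gives a stretched-exponential bound of order $\exp(-K n^{q\alpha})$; this bound is summable, so Borel--Cantelli for $\Vh$ yields \eqref{upper}, and \eqref{lower} follows by conjugacy. You apply the lemma with $a=2c\,n^{-\alpha}$ instead of dividing by $\tau_{\varphi_p}(Z_n)$, which could be $0$, and you take an explicit countable union over $\varepsilon=1/k$; both simply make rigorous two points the paper passes over quickly.
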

\begin{proof}
Since $V(\cdot)$ and $v(\cdot)$ are conjugate to each other, \eqref{lower} is equivalent to \eqref{upper}.

By Lemma \ref{mainLemma}, for $\varepsilon>0$ we have
\begin{equation}  \label{V}
\Vh\left( \{ Z_n-\overline{m} > \varepsilon \} \cup \{ Z_n-\underline{m} < -\varepsilon \} \right)  
\le 2 \exp\left(-\varphi_q\left(\frac{\varepsilon}{\tau_{\varphi_p}(Z_n)}\right)\right) ,
\end{equation}
where $1/p+1/q=1$ and we applied that  $\varphi_q=\varphi_p^{*}$.
By the condition $\tau_{\varphi_p}\!( Z_n )\le c\, n^{-\alpha}$, 
for all sufficiently large $n$ we have $\frac{\varepsilon}{\tau_{\varphi_p}(Z_n)}>1$, so using the definition of
$\varphi_q$, we obtain that the right-hand side of inequality \eqref{V} is majorated by
$C_0\exp\!\left(-K n^{q\alpha}\right),$ 
where $C_0= 2 \exp\!\left(\frac{1}{q}-\frac{1}{2}\right)$,
and $K=\frac{1}{q}\left(\frac{\varepsilon}{c}\right)^q$.

We now show that the series
\[
\sum_{n=1}^{\infty} \Vh\left( \{ Z_n-\overline{m} > \varepsilon \} \cup \{ Z_n-\underline{m} < -\varepsilon \} \right) 
\]
is convergent. 
It suffices to prove the convergence of
\[
\sum_{n=1}^{\infty}\exp\!\left(-K n^{\beta}\right),
\qquad {\text{where}} \ \ \beta=q\alpha>0.
\]
The function $f(x)=\exp(-Kx^\beta)$ is positive and eventually decreasing.
By the integral test,
\[
\sum_{n=1}^{\infty}\exp(-K n^\beta)
\le
\int_{0}^{\infty}\exp(-Kx^\beta)\,dx.
\]
Performing the substitution $t=Kx^\beta$, we obtain
\[
\int_{0}^{\infty}\exp(-Kx^\beta)\,dx
=
\frac{1}{\beta}K^{-1/\beta}
\int_{0}^{\infty} t^{1/\beta-1}e^{-t}\,dt
=
\frac{1}{\beta}K^{-1/\beta}
\Gamma\!\left(\frac{1}{\beta}\right)
<\infty.
\]
Now, by the Borel-Cantelli lemma, it follows that the $\Vh$ measure is zero that infinitely many of the events
$$
\left( \{ Z_n-\overline{m} > \varepsilon \} \cup \{ Z_n-\underline{m} < -\varepsilon \} \right)
$$
occur.
As it is true for any $\varepsilon>0$, so \eqref{upper} is true.
\end{proof}

%%%%%%%%%%%%%%%%%%%%%%%%%%%%%%%%%%%%%%%%%%%%
\section{Strong law of large numbers for independent sub-Gaussian variables}
\label{Chap_slln}
Our aim is to obtain a strong law of large numbers for independent identically distributed sub-Gaussian random variables.
However, our Theorem \ref{iidSLLN} is true for more general situation.
In Example \ref{SLLNex}, we construct a plausible model fitting to Theorem \ref{iidSLLN}.

We shall use the independence notion given in \cite{ChenZ2013}.
The sequence of random variables $\xi_1, \xi_2, \dots$
is called independent if for each $n=1,2,\dots$ and each non-negative measurable functions $f_1, f_2, \dots$
we have 
$$
\Eh(f_1(\xi_1) f_2(\xi_2) \cdots f_n(\xi_n))  = \Eh(f_1(\xi_1)) \Eh(f_2(\xi_2)) \cdots \Eh(f_n(\xi_n)) .
$$
If $\xi_1, \xi_2, \dots$ are independent, then they satisfy the following property
\begin{eqnarray} \label{subIndep}
\Eh \prod_{i=1}^k \exp\left( {\lambda}  (\xi_i- m) \right) 
&\le&
\prod_{i=1}^k \Eh \exp\left( {\lambda}  (\xi_i- m) \right)  \nonumber \\ 
& &\text{for any real} \ \lambda, \ m , \ \text{and positive integer} \ k .
\end{eqnarray}
Now, let $\xi_1, \xi_2, \dots $ be random variables satisfying the sub-Gaussian property
(that is they are $\varphi_2$-sub-Gaussian):
for fixed constants
$\sigma > 0$, $\overline{m}$, and $\underline{m}$
\begin{equation}   \label{upGa}
\Eh (e^{\lambda(\xi_i - \overline{m}})  \le e^{(\sigma^2\lambda^2/2)}, \quad {\text {for}} \quad \lambda>0, \quad \text{and} \quad\Eh (e^{\lambda(\xi_i - \underline{m}}) \le e^{(\sigma^2\lambda^2/2)}, \quad {\text {for}} \quad \lambda<0.
\end{equation}
%and 
%\begin{equation}   \label{lowGa}
%\Eh (e^{\lambda(\xi_i - \underline{m}}) \le e^{(\sigma^2\lambda^2/2)}, \quad {\text {for}} %\quad \lambda<0.
%\end{equation}
Now, let $S_n = \xi_1 + \dots + \xi_n$ for any positive integer $n$.
We are ready to prove a strong law of large numbers. 
%%%%%%%%%%%%
%%
\begin{theorem}  \label{iidSLLN}
Let $\xi_1, \xi_2, \dots $ be random variables satisfying \eqref{subIndep} and \eqref{upGa} 
with $S_n$ defined above. Then
\begin{equation}
\label{eq:SLLN-capacity}
\Vh\Big(
\Big\{\liminf_{n\to\infty}\frac{S_n}{n}< \underline{m}\Big\}
\cup
\Big\{\limsup_{n\to\infty}\frac{S_n}{n}> \overline{m}\Big\}
\Big)=0.
\end{equation}
\end{theorem}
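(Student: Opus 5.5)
The plan is to reduce Theorem \ref{iidSLLN} to Theorem \ref{MainTheorem} with $p=2$, applied to $Z_n = S_n/n$. Then the conclusion \eqref{upper} for $Z_n$ is exactly \eqref{eq:SLLN-capacity}. What remains is to show that $Z_n$ is $\varphi_2$-sub-Gaussian with the same parameters $\overline{m}$, $\underline{m}$, and that $\tau_{\varphi_2}(Z_n)\le c\,n^{-1/2}$. Note that $\varphi_2(x)=x^2/2$ for all $x$, since for $|x|>1$ we have $\frac12|x|^2-\frac12+\frac12 = x^2/2$. So \eqref{upGa} says precisely that each $\xi_i$ is $\varphi_2$-sub-Gaussian with parameter $a=\sigma$.

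The first step is the upper tail. Fix $\lambda>0$ and write $Z_n-\overline{m} = \frac1n\sum_{i=1}^n(\xi_i-\overline{m})$. Then
\begin{equation*}
\Eh e^{\lambda(Z_n-\overline{m})} = \Eh \prod_{i=1}^n \exp\Big(\frac{\lambda}{n}(\xi_i-\overline{m})\Big) \le \prod_{i=1}^n \Eh \exp\Big(\frac{\lambda}{n}(\xi_i-\overline{m})\Big) \le \exp\Big(n\cdot\frac{\sigma^2\lambda^2}{2n^2}\Big) = e^{\varphi_2(\sigma\lambda/\sqrt n)},
\end{equation*}
using \eqref{subIndep} with the real number $\lambda/n$ and $m=\overline{m}$, and then \eqref{upGa}, which applies because $\lambda/n>0$.

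The lower tail is identical. For $\lambda<0$ one uses $m=\underline{m}$ in \eqref{subIndep} and the second half of \eqref{upGa}, which is legitimate since $\lambda/n<0$. Hence $Z_n$ satisfies \eqref{upGauss} and \eqref{lowGauss} with $a=\sigma n^{-1/2}$, and by Definition \ref{tau} we get $\tau_{\varphi_2}(Z_n)\le \sigma n^{-1/2}$. The hypothesis of Theorem \ref{MainTheorem} therefore holds with $p=2$, $c=\sigma$, $\alpha=1/2$, and \eqref{upper} gives \eqref{eq:SLLN-capacity} directly.

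The only point needing care is the applicability of \eqref{subIndep} in the form used: it is stated for a common $\lambda$ and $m$, which is exactly our situation with $\lambda/n$ in place of $\lambda$. I expect no real obstacle there. If one instead wanted a self-contained argument, one could bypass Theorem \ref{MainTheorem}. Lemma \ref{mainLemma} gives the bound $2e^{-n\varepsilon^2/(2\sigma^2)}$, which is summable, and the Borel--Cantelli lemma for $\Vh$ then yields, for each $\varepsilon>0$, that q.s.\ eventually $\underline{m}-\varepsilon\le S_n/n\le \overline{m}+\varepsilon$. Taking $\varepsilon=1/k$ and using $\sigma$-subadditivity over $k$ finishes the argument.
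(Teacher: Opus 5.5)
Your proposal is correct and follows the paper's proof essentially verbatim: take $Z_n=S_n/n$, bound $\Eh e^{\lambda(Z_n-\overline{m})}$ for $\lambda>0$ and $\Eh e^{\lambda(Z_n-\underline{m})}$ for $\lambda<0$ by $e^{\sigma^2\lambda^2/(2n)}$ using \eqref{subIndep} and \eqref{upGa}, deduce $\tau_{\varphi_2}(Z_n)\le\sigma n^{-1/2}$, and invoke Theorem \ref{MainTheorem} with $p=2$. Your observation that $\varphi_2(x)=x^2/2$ for all $x$ is a welcome addition, as is your self-contained variant via Lemma \ref{mainLemma} with the explicit $a=\sigma n^{-1/2}$, which avoids substituting the infimum $\tau$ itself into the tail bound.
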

\begin{proof}
We shall apply Theorem \ref{MainTheorem} with $Z_n = \frac{S_n}{n}$.
Using the the sub-Gaussian property of $\xi_i$ and \eqref{subIndep},
\begin{eqnarray}
\Eh \exp( \lambda(Z_n - \overline{m}) ) =
\Eh \exp\left( \frac{\lambda}{n} \sum_{i=1}^n (\xi_i- \overline{m}) \right)=
\Eh \prod_{i=1}^n \exp\left( \frac{\lambda}{n}  (\xi_i- \overline{m}) \right)   \nonumber \\
\le
\prod_{i=1}^n \Eh\exp\left( \frac{\lambda}{n}  (\xi_i- \overline{m}) \right)
\le 
\prod_{i=1}^n \exp\left( \left(\frac{\lambda}{n}\right)^2  \left(\frac{\sigma^2}{2}\right) \right)
= \exp\left( \frac{\lambda^2}{n}  \left(\frac{\sigma^2}{2}\right) \right)
\end{eqnarray}
for $\lambda>0$.
Similarly,
\begin{eqnarray}
\Eh \exp( \lambda(Z_n - \underline{m}) ) =
\Eh \prod_{i=1}^n \exp\left( \frac{\lambda}{n}  (\xi_i- \underline{m}) \right)   \nonumber \\
\le
\prod_{i=1}^n \Eh\exp\left( \frac{\lambda}{n}  (\xi_i- \underline{m}) \right)
\le 
\prod_{i=1}^n \exp\left( \left(\frac{\lambda}{n}\right)^2  \left(\frac{\sigma^2}{2}\right) \right)
= \exp\left( \frac{\lambda^2}{n}  \left(\frac{\sigma^2}{2}\right) \right)
\end{eqnarray}
for $\lambda<0$.
So in Theorem \ref{MainTheorem},
$\tau_{\varphi_2}\!( Z_n )\le \,\sigma n^{-(1/2)}$ and it implies the result.
\end{proof}
%%%%%%%%%%%%%%%%%%%%%%%%
We see, that Theorem \ref{iidSLLN} is valid for independent identically distributed sub-Gaussian random variables.

In the following example, we shall use the well-known fact that for a random variable $X$ having normal distribution
with expectation $m$ and variance $\sigma^2>0$
$$
\E e^{\lambda X} = e^{\frac{\lambda^2 \sigma^2}{2} + \lambda m} .
$$

\begin{example} \label{SLLNex}
Let $\Omega$ be the real line and let $\FD$ be the family of its Borel sets.
Let $M$ be an arbitrary non-empty bounded set of real numbers,
let $\underline{m}=\inf(M)$ and $\overline{m}=\sup(M)$.
Let $\sigma>0$ be fixed.
Let $P_m$ denote the normal distribution with expectation $m$ and variance $\sigma^2$.
Then $(\Omega, \FD, P_m)$ is a usual probability space for any $m\in M$.
Let $\E_m X = \int_\Omega X dP_m$ be the usual expectation of the random variable $X$.
Let $\xi$ be the identity map: $\xi(\omega) = \omega$ for any $\omega\in \Omega$ (here $\Omega$ is the real line).
Then $\xi$ has a normal distribution with mean $m$ and variance $\sigma^2$.

Now, define the upper expectation as
$\Eh X = \sup \{ \E_m X \, : \, m\in M \}$
and the upper probability as $\Vh (A) = \sup \{ P_m (A) \, : \, m\in M \}$.
Then $\Eh \xi = \overline{m}$ and for the lower expectation
$\ED \xi = \underline{m}$.

For $\lambda>0$, from equation  
$$
\E_m e^{\lambda \xi- \overline{m}} = e^{\frac{\lambda^2 \sigma^2}{2} + \lambda (m-\overline{m})} 
$$
we see that $\Eh e^{\lambda \xi- \overline{m}} = e^{\frac{\lambda^2 \sigma^2}{2}}$,
so condition  \eqref{upGa} is satisfied, we have equality there, and $\overline{m}$ is the optimal constant in that 
condition.
Similarly,
for $\lambda<0$, 
we can see that $\Eh e^{\lambda X- \underline{m}} = e^{\frac{\lambda^2 \sigma^2}{2}}$
so condition  \eqref{upGa} is satisfied, we have equality there, and $\underline{m}$ is the optimal constant in that 
condition.

If $\xi_1, \xi_2, \dots $ are independent random variables, each of them has the same distribution as $\xi$, 
$S_n= \xi_1 + \cdots + \xi_n$, then the strong law of large numbers is satisfied, i.e.,
\eqref{eq:SLLN-capacity} holds.

However, we prefer the following explicit construction of the sequence $\xi_1, \xi_2, \dots $, for which 
\eqref{subIndep} is satisfied. We shall use the original idea of \cite{Huber1973} to construct negatively dependent random variables.
Consider copies $(\Omega^{(i)}, \FD^{(i)}, P_m^{(i)})$, $i=1,2,\dots$ of the probability space $(\Omega, \FD, P_m)$.
Then construct their product probability space 
$(\Omega^{(\infty)}, \FD^{(\infty)}, P_m^{(\infty)}) = \prod_{i=1}^\infty(\Omega^{(i)}, \FD^{(i)}, P_m^{(i)})$.
Then let 
$\xi_i$ be the $i$th coordinate random variable, i.e.,
$\xi_i(x_1, x_2, \dots ) = x_i$ for any $i$.
Then, under $P_m^{(\infty)}$, the random variables $\xi_1, \xi_2, \dots $ are independent and identically distributed,
each having distribution $P_m$, that is, a normal distribution with expectation $m$ and variance $\sigma^2$.
But we need upper probability and upper expectation.
So let $\Vh^{(\infty)}(A) =\sup_m P_m^{(\infty)}(A)$ for any event $A$ and
$\Eh^{(\infty)}(X) =\sup_m E_m^{(\infty)}(X)$ for an appropriate random variable $X$.
Let $f_1, f_2, \dots ,f_n$ be non-negative measurable functions.
Then
\begin{eqnarray*}
\Eh^{(\infty)}(f_1(\xi_1) f_2(\xi_2)) \cdots f_n(\xi_n)) =
\sup_m E_m^{(\infty)} (f_1(\xi_1) f_2(\xi_2)) \cdots f_n(\xi_n)) \\
= \sup_m E_m(f_1(\xi_1)) E_m (f_2(\xi_2)) \cdots E_m (f_n(\xi_n))  \\
\le
\sup_m E_m(f_1(\xi_1)) \cdot \sup_m E_m (f_2(\xi_2)) \cdots \sup_m E_m (f_n(\xi_n))\\
=
\sup_m E_m^{(\infty)}(f_1(\xi_1)) \cdot \sup_m E_m^{(\infty)} (f_2(\xi_2)) \cdots \sup_m E_m^{(\infty)} (f_n(\xi_n)) \\
=
\Eh^{(\infty)}(f_1(\xi_1)) \cdot \Eh^{(\infty)}( f_2(\xi_2)) \cdots \Eh^{(\infty)} (f_n(\xi_n)) .
\end{eqnarray*}
So \eqref{subIndep} is satisfied.
By Theorem \ref{iidSLLN}, the relation \eqref{eq:SLLN-capacity} is true, so the strong law is satisfied.
\end{example}

%\printcredits

\printbibliography
\bigskip
\noindent Nyanga Honda Masasila \\
University of Debrecen \\
Doctoral School of Informatics \\
Debrecen, Hungary \\
E-mail: \texttt{hondanyanga@gmail.com} \\

\bigskip
\noindent Istv\'an Fazekas \\
University of Debrecen \\
Faculty of Informatics\\
Debrecen, Hungary \\
E-mail: \texttt{fazekas.istvan@inf.unideb.hu} \\
\bigskip

\end{document}